%%%%%%%%%%%%%%%%%%%%%%%%%% author_proc.tex %%%%%%%%%%%%%%%%%%%%%%%%%%
%
% Sample root file for your "contribution" to the SCEE 2022
% post-conference proceedings
%
% Use this file as a template for your own input.
%
% Original file by Springer
%
% Last edited: 24.06.2008, Luis Costa, scee2008 (at) ct.tkk.fi
%              13.05.2014, AB, scee2014 (at) math.uni-wuppertal.de
%              24.10.2021, Martijn van Beurden, SCEE 2022
%%%%%%%%%%%%%%%%%%%%%%%%%% SCEE  2014 %%%%%%%%%%%%%%%%%%%%%%%%%%

%\documentclass{styles/SVProc} %styles/ must be a subfolder
\documentclass[graybox]{svmult}

%======================================================================
% READ ME:
% Dear author,
% Please use the LaTeX packages that are set up in this pre-amble
% and refrain from using others, as these may clash with these
% standard packages, which gives the editors a nightmare to deal with. 
% Thank you for your understanding.
%======================================================================

%====================Fonts:=======================
% do NOT use any other fonts here!!!
%=================================================
\usepackage{mathptmx}       % selects Times Roman as basic font; note that this font does
                            % not support bold Greek characters. Note that \boldmath{}, \boldsymbol or \pmb generate bold
                            % symbols but these are quite ugly by double or triple printing the symbols. Do not use!

\usepackage{helvet}         % selects Helvetica as sans-serif font
\usepackage{courier}        % selects Courier as typewriter font
%\usepackage{type1cm}        % activate if the above 3 fonts are not available on your system

%\usepackage{anyfontsize}
%\usepackage{fancyvrb}       % LaTeX tool for writing verbatim text; defines \verb used below.
                             % Comment package out if not required.

%=======================Figures and (sub)captions==========
% Do NOT use and processing packages like pgf, tikz, pgfplots etc.
% Provide your graphical work as pdf and/or eps files.
\usepackage{graphicx}              % standard LaTeX graphics tool when including figure files

%\usepackage[caption=false]{subfig} % do NOT use the subcaption package as it clashes with the Springer caption format
%\usepackage{wrapfig}

%=========================Math=============================
% Supported and often used math packages:
%
\usepackage{siunitx}
\usepackage{amsbsy}
\usepackage{amsmath,bm}
\usepackage{amssymb}
\usepackage{amsfonts}
%\usepackage{bbm}
%\usepackage{mathtools}
%\usepackage{pifont}
%\usepackage{algorithmic}
%\usepackage{algorithm} % !!do NOT use the package algorithm2e as it will clash with the algorithm package!!

%=======================Tables=============================
\usepackage{multirow}
%\usepackage{threeparttable}
%\usepackage{arydshln}

%=======================Miscellaneous======================
%\usepackage{makeidx}         % allows for index generation
%\makeindex
%\usepackage{multicol}        % used for the two-column index

\usepackage[bottom]{footmisc}% places footnotes at page bottom
\usepackage{cite}
%\usepackage{standalone}

% to typeset URLs, URIs, and DOIs
\usepackage{url}

%%%%%%%%%%%%%%%%%%%%%%%%%%%%%%%%%%%%%%%%%%%%%%%%%%%%%%%%%%%%%%%%
%%%%%%%%%%%%%%%%%%%%%%%%%%%%%%%%%%%%%%%%%%%%%%%
\spnewtheorem{assumption}{Assumption}{\bfseries}{\itshape}

\begin{document}

\title*{Global convergence of iterative solvers for problems of nonlinear magnetostatics}
% Use \titlerunning{Short Title} for an abbreviated version of
% your contribution title if the original one is too long
\author{Herbert Egger% \inst{1}\inst{2}
\and
Felix Engertsberger% \inst{2}
\and
Bogdan Radu% \inst{1}
}

% Use \authorrunning{Short Title} for an abbreviated version of
% your contribution title if the original one is too long
\institute{Herbert Egger, Bogdan Radu \at Johann Radon Institute for Computational and Applied Mathematics, Linz, Austria,\\
\email{herbert.egger@ricam.oeaw.ac.at,bogdan.radu@ricam.oeaw.ac.at}
  \and Felix Engertsberger \at Institute for Numerical Mathematics, Johannes Kepler University, Linz, Austria,\\
  \email{felix.engertsberger@jku.at}}

%
% Use the package "url.sty" to avoid
% problems with special characters
% used in your e-mail or web address
%
\maketitle

\abstract{We consider the convergence of iterative solvers for problems of nonlinear magnetostatics. Using the equivalence to an underlying minimization problem, we can establish global linear convergence of a large class of methods, including the damped Newton-method, fixed-point iteration, and the Ka\v{c}anov  iteration, which can all be  interpreted as generalized gradient descent methods. Armijo backtracking is considered for an adaptive choice of the stepsize. The general assumptions required for our analysis cover inhomogeneous, nonlinear, and anisotropic materials, as well as permanent magnets. The main results are proven on the continuous level, but they  carry over almost verbatim to various approximation schemes, including finite elements and isogeometric analysis,  leading to bounds on the iteration numbers, which are independent of the particular discretization. The theoretical results are illustrated by numerical tests for a typical benchmark problem.}

\abstract*{We consider the convergence of iterative solvers for problems of nonlinear magnetostatics. Using the equivalence to an underlying minimization problem, we prove global linear convergence of a large class of methods, including the damped Newton-method, fixed-point iteration, and the Ka\v{c}anov  iteration, which can all be  interpreted as generalized gradient descent methods. Armijo backtracking is considered for an adaptive choice of the stepsize. The general assumptions required for our analysis cover inhomogeneous, nonlinear, and anisotropic materials, as well as permanent magnets. The main results are proven on the continuous level, but they  carry over almost verbatim to various approximation schemes, including finite elements and isogeometric analysis,  leading to bounds on the iteration numbers, which are independent of the particular discretization. The theoretical results are illustrated by numerical tests for a typical benchmark problem.}

\section{Introduction}
% % motivation
Systems of nonlinear magnetostatics and -quasistatics arise in the modelling of various high-power low-frequency applications, e.g., electric machines or power transformers. The correct representation of inhomogeneous and nonlinear material behavior is a key ingredient for the accuracy of the mathematical descriptions. For the systematic optimization and control of such devices, the resulting nonlinear partial differential equations have to be solved multiple times which requires accurate, robust and reliable solvers. 
%
%Scope
We are interested in the efficient numerical solution of such systems of nonlinear magnetostatics. As a model problem, we consider~\cite{engertsberger:Meunier2008}
\begin{alignat}{4}
\operatorname{curl} \mathbf{h} &= \mathbf{j} \quad &\text{in } \Omega, \qquad && \mathbf{h} &=  \partial_\mathbf{b} w(\mathbf{b}), \label{engertsberger:eq:1}\\
\operatorname{div} \mathbf{b} &= 0 \quad & \text{in } \Omega, \qquad && \mathbf{b} \cdot \mathbf{n} &= 0 \quad \text{on } \partial\Omega. \label{engertsberger:eq:2}
\end{alignat}
Here $\mathbf{b}$ and $\mathbf{h}$ are the magnetic flux density and field intensity, and $\mathbf{j}=\operatorname{curl} \mathbf{h_s}$ is the prescribed current density, with $\mathbf{h_s}$ a representing source field. 
%
% material law
The material law (\ref{engertsberger:eq:1}b) relates, locally at every point $x$ in the domain, the flux $\mathbf{b}$ and field $\mathbf{h}$ via the derivative of the magnetic energy density $w(\mathbf{b})$. This relation allows to model  inhomogeneous, nonlinear, and anisotropic anhysteretic material response~\cite{engertsberger:Silvester1991}. 
%For optimization or control of such devices, the underlying mathematical models have to be solved multiple times which requires accurate, robust and reliable solvers.
%

\bigskip
\noindent 
\textbf{Numerical solution.}
On a simply connected domain $\Omega$,  one can enforce (\ref{engertsberger:eq:2}a)--(\ref{engertsberger:eq:2}b) by writing $\mathbf{b} = \operatorname{curl} \mathbf{a}$ via a magnetic vector potential $\mathbf{a}$ with homogeneous boundary conditions; see e.g.~\cite{engertsberger:Meunier2008}. 
The weak form of the remaining equations~(\ref{engertsberger:eq:1}a)--(\ref{engertsberger:eq:1}b) reads
\begin{align} \label{engertsberger:eq:var}
\langle  \partial_\mathbf{b} w(\operatorname{curl} \mathbf{a}), \operatorname{curl} \mathbf{v} \rangle_\Omega &= \langle \mathbf{h_s}, \operatorname{curl} \mathbf{v} \rangle_\Omega \qquad \forall \mathbf{v} \in V_0.
\end{align}
Here $V_0$ is an appropriate subspace of $H(\operatorname{curl};\Omega)$ incorporating the required gauging and boundary conditions and $\langle \cdot,\cdot\rangle_\Omega$ is the $L^2$-scalar product.
For the solution of the nonlinear variational problem \eqref{engertsberger:eq:min}, we consider iterative methods of the form 
\begin{align} \label{engertsberger:eq:iter}
\mathbf{a}^{n+1} = \mathbf{a}^n + \tau^n \mathbf{\delta a}^n, \qquad n \ge 0,
\end{align}
with appropriate stepsize $\tau^n > 0$ chosen below and search direction $\mathbf{\delta a}^n$ defined as the solution of a related linear variational problem
\begin{align}
    \langle \nu^n \operatorname{curl} \mathbf{\delta a}^n  , \operatorname{curl} \mathbf{v} \rangle_\Omega = \langle   \mathbf{h_s} -  \partial_\mathbf{b} w(\operatorname{curl} \mathbf{a}^n), \operatorname{curl} \mathbf{v} \rangle_\Omega \qquad \forall\,\mathbf{v} \in V_0. 
    \label{engertsberger:eq:update}
\end{align}
Particular choices for the generalized reluctivity tensor $\nu^n$ and stepsize $\tau^n=\tau>0$ 
lead to some well-known schemes discussed in the literature, e.g., 
\begin{itemize}
\item $\nu^n = \bar \nu > 0$ yields the fixed-point iteration~\cite{engertsberger:Dlala2008}; 
\item $\nu^n$ = $ \partial^2_{\mathbf{bb}} w(\operatorname{curl} \mathbf{a}^n)$ leads to the (damped) Newton method~\cite{engertsberger:Fujiwara2005,engertsberger:Borghi2004}; 
\item $\nu^n = \nu_{ch}(|\operatorname{curl} \mathbf{a}^n|)$ results in the Ka\v{c}anov iteration~\cite{engertsberger:Lomonova2019,engertsberger:Han1997}.
\end{itemize}
Note that the latter choice implicitly assumes that the material is isotropic, in which case (\ref{engertsberger:eq:1}b) can be expressed as $\mathbf{h} = \nu_{ch}(|\mathbf{b}|) \mathbf{b}$ with chord reluctivity $\nu_{ch}$.

% scope
\bigskip
\noindent 
\textbf{Main contributions.}
In this paper, we study the convergence of iterative methods of the form~\eqref{engertsberger:eq:iter}--\eqref{engertsberger:eq:update}.  
A main ingredient for our analysis is to view \eqref{engertsberger:eq:var} as the optimality system for an underlying convex minimization problem
\begin{align}
    \min_{\mathbf{a} \in V_0} \int_{\Omega}  w(\operatorname{curl} \mathbf{a}) - \mathbf{h_s} \cdot \operatorname{curl} \mathbf{a} \, dx.
\label{engertsberger:eq:min}
\end{align}
As a consequence, the iteration \eqref{engertsberger:eq:iter}--\eqref{engertsberger:eq:update} can be interpreted as a generalized gradient descent algorithm. Convergence in finite-dimensions can thus be deduced from well-known results of optimization theory; see e.g., \cite[Theorem 3.2]{engertsberger:Nocedal2006}. 
By a careful modification of the arguments, we here obtain a corresponding convergence result in infinite dimensions. A related analysis was recently carried out in~\cite{engertsberger:Heid2023} in a more general context and for a specific stepsize choice. 
We here prove global convergence of the iteration \eqref{engertsberger:eq:iter}--\eqref{engertsberger:eq:update} under general assumptions on the generalized reluctivities $\nu^n$ and the stepsize $\tau^n$ which, in particular, cover all methods discussed above. 
As one appropriate admissible stepsize rule, we consider Armijo backtracking  \cite{engertsberger:Nocedal2006} \begin{align} \label{engertsberger:eq:armijo}
\tau^n &= \max\big\{\tau=\rho^k: k \ge 0 \quad \text{such that} \\ 
&\qquad \qquad  \Phi(\mathbf{a}^n+\tau \mathbf{\delta a}^n) \le \Phi(\mathbf{a}^n)  + \sigma\,\tau\, \langle \partial_\mathbf{b} \phi(\operatorname{curl} \mathbf{a}^n), \operatorname{curl} \mathbf{\delta a}^n\rangle_\Omega\big\} \notag
\end{align}
with parameters $0<\rho<1$ and $0 < \sigma < 1/2$.  Here and below, we use
\begin{align}
\Phi(\mathbf{a})=\int_\Omega \phi(\operatorname{curl} \mathbf{a}) \, dx, \qquad 
\phi(\operatorname{curl} \mathbf{a})=w(\operatorname{curl} \mathbf{a}) - \mathbf{h_s} \cdot \operatorname{curl} \mathbf{a},
\end{align} 
to abbreviate the total magnetic energy and energy density.
Notice that the Armijo rule \eqref{engertsberger:eq:armijo} eventually chooses $\tau^n=1$ in which case \eqref{engertsberger:eq:iter}--\eqref{engertsberger:eq:update} with $\nu^n = \partial_\mathbf{b} w(\operatorname{curl} \mathbf{a}^n)$ leads to the full Newton step. Hence fast convergence can be expected.

\bigskip 
\noindent
\textbf{Generalizations.}
Our analysis is carried out on the infinite-dimensional level. The results, however, carry over almost verbatim to appropriate Galerkin approximations, like FEM or IgA, and yield convergence factors for the nonlinear iterations that are independent of the discretization parameters, e.g., the polynomial degree or the meshsize.  
Similar results can also be obtained for other formulations of nonlinear magnetostatics, e.g. the reduced scalar potential formulation; see \cite{engertsberger:Engertsberger2023} for details.

\bigskip 
\noindent 
\textbf{Outline.}
In Section \ref{engertsberger:section:vector:potential} we introduce our notation and main assumptions and then formally state the model problem under investigation and comment on its well-posedness. In Section~\ref{engertsberger:section:global:convergence}, we provide the details about the iterative methods under consideration, and we then state and prove our main convergence result.  
Computational results are presented in Section~\ref{engertsberger:section:numerical:illustration} for illustration of our theoretical findings. 
%%%%%%%%%%%%%%%%%%%%%%%%%%%%%%%%%%%%%%
\section{Preliminaries}
\label{engertsberger:section:vector:potential}
Let $\Omega \subset \mathbb{R}^d$, $d=2,3$ be a bounded and Lipschitz domain. We denote by $L^2(\Omega)$, $H^1(\Omega)$, and $H(\operatorname{curl};\Omega) \subset L^2(\Omega)^d$ the usual function spaces arising in computational electromagnetics~\cite{engertsberger:Monk}. 
We further write $H_0^1(\Omega)$ and $H_0(\operatorname{curl};\Omega)$ for the subspaces of functions with corresponding homogeneous boundary conditions. 
In three space dimensions, we split $H_0(\operatorname{curl}) = \nabla H_0^1 \oplus V_0$ with a closed subspace $V_0 \subset H_0(\operatorname{curl};\Omega)$ being independent of gradient fields. One particular choice is the orthogonal complement $V_0 = \{\mathbf{v} \in H_0(\operatorname{curl};\Omega) : \langle \mathbf{v}, \nabla \psi\rangle_\Omega = 0 \; \forall \psi \in H_0^1(\Omega)\}$, but also other choices leading to a stable splitting are admissible; see e.g.~\cite{engertsberger:Monk,engertsberger:Meunier2008}. 
In two dimensions, we simply set $V_0:=H_0(\operatorname{curl};\Omega) \simeq H_0^1(\Omega)$, assuming that $\Omega$ is simply connected. 
We can now introduce the main assumptions for our analysis.
\begin{assumption} \label{engertsberger:ass:1}
$\Omega \subset \mathbb{R}^d$, $d=2,3$ is a bounded Lipschitz domain and simply connected. 
The energy density $ w : \Omega \times \mathbb{R}^d \to \mathbb{R}$ is piecewise smooth in the first variable and satisfies, for every $x \in \Omega$, the following conditions:
\begin{itemize}
\item $ w(x,\cdot) \in C^2(\mathbb{R}^d)$,
\qquad  $| \partial_b w(x,0)| \le C$, 
\smallskip
\item 
$\gamma\,|\xi|^2 \leq \langle  \partial^2_{\mathbf{bb}} w(x,\eta) \, \xi, \xi \rangle \leq L \,|\xi|^2 
\quad \forall \xi,\eta \in \mathbb{R}^d$.
\end{itemize}
The current density satisfies $\mathbf{j}=\operatorname{curl} \mathbf{h_s}$ for some $\mathbf{h_s} \in H(\operatorname{curl};\Omega)$.
The space $V_0$ is chosen as indicated above, such that $\|\mathbf{v}\|_{L^2(\Omega)} \le C_\Omega \|\operatorname{curl} \mathbf{v}\|_{L^2(\Omega)}$ for all $\mathbf{v} \in V_0$.
\end{assumption}
For ease of notation, we will usually drop the dependence on the spatial coordinate~$x$ and simply write $w(\mathbf{b})$ instead of $w(x,\mathbf{b})$ in the following, as we did in the introduction.
As a first preliminary result, let us briefly comment on the equivalence of the problems \eqref{engertsberger:eq:var} and \eqref{engertsberger:eq:min}, as well as the well-posedness of both formulations.
\begin{lemma} \label{engertsberger:lemma:wellposedness}
Let Assumption~\ref{engertsberger:ass:1} be valid. Then the nonlinear variational problem \eqref{engertsberger:eq:var}  has a unique solution which is also the unique minimizer of \eqref{engertsberger:eq:min}.
\end{lemma}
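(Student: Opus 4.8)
The plan is to recognize this as a standard application of the direct method of the calculus of variations together with strict convexity, which also yields the equivalence with the first-order optimality system. Concretely, I would work on the Hilbert space $V_0$ equipped with the norm $\|\mathbf{v}\|_{V_0} := \|\operatorname{curl} \mathbf{v}\|_{L^2(\Omega)}$, which is a genuine norm on $V_0$ because of the Poincaré-type inequality $\|\mathbf{v}\|_{L^2(\Omega)} \le C_\Omega \|\operatorname{curl}\mathbf{v}\|_{L^2(\Omega)}$ assumed in Assumption~\ref{engertsberger:ass:1}; under the stated splitting this norm is equivalent to the full $H(\operatorname{curl};\Omega)$-norm on $V_0$, so $(V_0,\|\cdot\|_{V_0})$ is a reflexive Banach (in fact Hilbert) space.

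First I would collect the quantitative properties of the reduced functional $\Phi(\mathbf{a}) = \int_\Omega w(\operatorname{curl}\mathbf{a}) - \mathbf{h_s}\cdot\operatorname{curl}\mathbf{a}\,dx$ that follow from the two-sided bound $\gamma|\xi|^2 \le \langle \partial^2_{\mathbf{bb}}w(x,\eta)\xi,\xi\rangle \le L|\xi|^2$. A Taylor expansion of $w(x,\cdot)$ in the second variable, using $|\partial_b w(x,0)|\le C$, gives the pointwise bounds
\begin{align}
\tfrac{\gamma}{2}|\mathbf{b}|^2 - C|\mathbf{b}| \le w(x,\mathbf{b}) \le \tfrac{L}{2}|\mathbf{b}|^2 + C|\mathbf{b}|, \notag
\end{align}
which integrate and combine with the Cauchy–Schwarz estimate $|\langle \mathbf{h_s},\operatorname{curl}\mathbf{a}\rangle_\Omega| \le \|\mathbf{h_s}\|_{L^2}\|\operatorname{curl}\mathbf{a}\|_{L^2}$ to show that $\Phi$ is well-defined, finite, bounded below, and coercive on $V_0$: $\Phi(\mathbf{a}) \ge \tfrac{\gamma}{4}\|\operatorname{curl}\mathbf{a}\|_{L^2}^2 - c$ for a constant $c$ depending on $\gamma, C, C_\Omega, \|\mathbf{h_s}\|_{L^2}$. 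The lower bound on the Hessian further shows $\mathbf{b}\mapsto w(x,\mathbf{b})$ is strongly convex uniformly in $x$, hence $\Phi$ is strongly convex on $V_0$; combined with the continuity/differentiability of $\Phi$ (its Gâteaux derivative is exactly the left-hand side of \eqref{engertsberger:eq:var}, bounded and hemicontinuous by the $C^2$-bound on $w$), $\Phi$ is weakly lower semicontinuous. The direct method then yields existence of a minimizer of \eqref{engertsberger:eq:min}, and strict convexity yields its uniqueness; taking the Gâteaux derivative at the minimizer and setting it to zero shows the minimizer solves \eqref{engertsberger:eq:var}. Conversely, by convexity any solution of the optimality system \eqref{engertsberger:eq:var} is a global minimizer, which closes the equivalence and, with uniqueness of the minimizer, gives uniqueness for \eqref{engertsberger:eq:var}.

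The main obstacle — and the only point requiring genuine care rather than bookkeeping — is the functional-analytic setup on $V_0$ in three dimensions: one must make sure that $\|\operatorname{curl}\cdot\|_{L^2}$ really is an equivalent norm on $V_0$ so that bounded minimizing sequences are bounded in $H(\operatorname{curl};\Omega)$, that $V_0$ is weakly closed, and that the nonlinear term $\mathbf{a}\mapsto\int_\Omega w(\operatorname{curl}\mathbf{a})$ is weakly lower semicontinuous (which follows from convexity of $w(x,\cdot)$ plus strong $L^2$-continuity, via Tonelli/Mazur, since weak convergence in $V_0$ forces weak $L^2$-convergence of the curls). All of this is standard and is precisely what the choice of $V_0$ and the Poincaré inequality in Assumption~\ref{engertsberger:ass:1} are there to guarantee; the remaining estimates are the routine Taylor/Cauchy–Schwarz computations sketched above. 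Alternatively, one can bypass the direct method entirely by invoking the Browder–Minty theorem: the operator $A:V_0\to V_0^*$, $\langle A\mathbf{a},\mathbf{v}\rangle := \langle\partial_\mathbf{b}w(\operatorname{curl}\mathbf{a}),\operatorname{curl}\mathbf{v}\rangle_\Omega - \langle\mathbf{h_s},\operatorname{curl}\mathbf{v}\rangle_\Omega$ is, by the Hessian bounds, Lipschitz continuous and strongly monotone with $\langle A\mathbf{a}-A\mathbf{a}',\mathbf{a}-\mathbf{a}'\rangle \ge \gamma\|\operatorname{curl}(\mathbf{a}-\mathbf{a}')\|_{L^2}^2$, hence bijective, giving existence and uniqueness for \eqref{engertsberger:eq:var} directly; the identification with the minimizer of \eqref{engertsberger:eq:min} then again follows from convexity. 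I would present whichever of the two is shorter given the conventions already fixed in the paper.
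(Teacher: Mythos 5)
Your proposal is correct, but your primary route differs from the one the paper takes. The paper does not use the direct method of the calculus of variations at all: it first establishes existence and uniqueness for the variational problem \eqref{engertsberger:eq:var} by observing that the iteration \eqref{engertsberger:eq:iter}--\eqref{engertsberger:eq:update} with $\nu^n=1$ and a small fixed stepsize is Zarantonello's fixed-point iteration, whose contraction follows from the strong monotonicity and Lipschitz continuity of $\mathbf{b}\mapsto\partial_\mathbf{b}w(x,\mathbf{b})$ encoded in \eqref{engertsberger:eq:dbw}; only afterwards does it identify \eqref{engertsberger:eq:var} as the first-order optimality system of the convex problem \eqref{engertsberger:eq:min} to transfer uniqueness to the minimizer. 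Your \emph{alternative} suggestion (Browder--Minty for the Lipschitz, strongly monotone operator $A$) is essentially the same argument as the paper's, since Zarantonello's theorem is the constructive form of that result for this class of operators. Your main route --- coercivity, strong convexity, and weak lower semicontinuity of $\Phi$ followed by the direct method, then the Euler--Lagrange identification --- is an equally valid and entirely standard alternative; what it costs is the weak-lsc/weak-closedness bookkeeping on $V_0$ that you correctly flag, and what the paper's choice buys is a proof that doubles as a first instance of the convergence analysis of Section~\ref{engertsberger:section:global:convergence}, since the existence argument literally is the fixed-point method studied later. One cosmetic remark: your pointwise Taylor bounds on $w(x,\mathbf{b})$ should carry the additive term $w(x,0)$ (or you should normalize $w(x,0)=0$, which is harmless since it shifts $\Phi$ by a constant); this does not affect coercivity or any conclusion.
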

\begin{proof}
The conditions of Assumption~\ref{engertsberger:ass:1} guarantee the strong coercivity and quadratic boundedness of the energy density and further imply the bounds
\begin{align}
    \gamma\,|\mathbf{u}-\mathbf{z}|^2 \le \langle  \partial_\mathbf{b} w(x,\mathbf{u}) -  \partial_\mathbf{b} w(x,\mathbf{z}), \mathbf{u}-\mathbf{z} \rangle \le L\,|\mathbf{u}-\mathbf{z}|^2. 
    \label{engertsberger:eq:dbw}
\end{align}
Then \eqref{engertsberger:eq:iter}--\eqref{engertsberger:eq:update} with $\nu = 1$ and $\tau^n = \tau>0$ can be interpreted as Zarantonello's fixed-point iteration~\cite{engertsberger:Zeidler19902B}, and contraction can be established for all $0 < \tau < \tau^*$ with $\tau^*$ depending only on $\gamma$, $L$, and $C_\Omega$; see \cite{engertsberger:Heise1994} for a related analysis. This already ensures existence of a unique solution $\mathbf{a} \in V_0$ to \eqref{engertsberger:eq:var}. 
It is further not difficult to see that this system amounts to the first order optimality conditions for the convex minimization problem \eqref{engertsberger:eq:min}, which yields the second assertion of the lemma. \qed
\end{proof}

\section{Main results}
\label{engertsberger:section:global:convergence}

The fixed-point iteration used to establish the existence of a unique solution 
%in Lemma~\ref{engertsberger:lemma:wellposedness} 
can be viewed as a gradient descent method applied to the minimization problem \eqref{engertsberger:eq:min}. The estimates 
\eqref{engertsberger:eq:dbw} further imply that the functional $\Phi(\mathbf{a})$ over which is minimized is strongly convex and coercive. We will now use these observations to prove convergence of the iteration \eqref{engertsberger:eq:iter}--\eqref{engertsberger:eq:update} for appropriate stepsizes $\tau^n$ and more general choices of $\nu^n$ satisfying the following conditions.
\begin{assumption}
\label{engertsberger:ass:2}
   The generalized reluctivity tensors $\nu^n : \Omega \times \mathbb{R}^d \rightarrow \mathbb{R}^{d\times d}$ are symmetric and uniformly elliptic and bounded, i.e., one has
    \begin{align}
    \nu^n(x,\eta) = \nu^n(x,\eta)^T 
    \quad \text{and} \quad 
    \alpha \,|\xi|^2 \leq \langle \nu^n(x,\eta) \, \xi , \xi \rangle \leq \beta \,|\xi|^2 \label{engertsberger:eq:nu}
    \end{align}
    for all $x \in \Omega$, and $\xi,\eta \,\in \mathbb{R}^d$ with some positive constants $\alpha,\beta>0$. 
    % Moreover, $0<\tau_* \le \tau^n \le \tau^* < \frac{2\alpha}{L}$.
\end{assumption}
We can now state and prove the main theorem of our manuscript.
\begin{theorem}
\label{engertsberger:theorem:global:convergence}
  Let Assumptions \ref{engertsberger:ass:1} and \ref{engertsberger:ass:2} be valid. 
  Then for any choice $\mathbf{a}^0 \in V_0$, the iteration \eqref{engertsberger:eq:iter}--\eqref{engertsberger:eq:armijo} 
  is well-defined and the iterates $\mathbf{a}^n$ converge to the unique solution $\mathbf{a} \in V_0$ of \eqref{engertsberger:eq:min} respectively \eqref{engertsberger:eq:var}. 
  Moreover, the convergence is \emph{$r$-linear}, i.e., 
    \begin{align}
        \|\operatorname{curl} (\mathbf{a}^n - \mathbf{a})\|_{L^2(\Omega)}^2 \leq C\,q^n \|\operatorname{curl} (\mathbf{a}^0 - \mathbf{a})\|_{L^2(\Omega)}^2
    \end{align}
    with constant $C = \frac{L}{\gamma}$ and contraction factor $q = 1 - \rho \min\{2(1-\sigma)\frac{\alpha}{L},1\}\,\sigma\,\frac{2\,\gamma^2}{L\,\beta}$.
    % $q= 1 - \rho \min\{2(1-\sigma)\frac{\alpha}{L},1\}\,\sigma\,\frac{2\,\gamma^2}{L\,\beta}$.
\end{theorem}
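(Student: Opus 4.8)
The plan is to proceed exactly as in the finite-dimensional convergence analysis of gradient-type methods, but working directly in the Hilbert space $V_0$ equipped with the inner product $\langle \operatorname{curl}\,\cdot\,,\operatorname{curl}\,\cdot\,\rangle_\Omega$, which by Assumption~\ref{engertsberger:ass:1} is an equivalent norm on $V_0$. The three ingredients I would establish first are: (i) the search direction $\mathbf{\delta a}^n$ is a \emph{descent direction} whose angle with the negative gradient is bounded away from $\pi/2$, uniformly in $n$, thanks to the spectral bounds $\alpha,\beta$ on $\nu^n$ in Assumption~\ref{engertsberger:ass:2}; (ii) the functional $\Phi$ has a Lipschitz-continuous gradient with constant $L$ and is $\gamma$-strongly convex, both inherited from \eqref{engertsberger:eq:dbw}; (iii) consequently the Armijo rule \eqref{engertsberger:eq:armijo} terminates after finitely many backtracking steps with a stepsize $\tau^n$ bounded below by a positive constant depending only on $\rho,\sigma,\alpha,L$.

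For step (i), testing \eqref{engertsberger:eq:update} with $\mathbf{v}=\mathbf{\delta a}^n$ gives $\langle \nu^n \operatorname{curl}\mathbf{\delta a}^n,\operatorname{curl}\mathbf{\delta a}^n\rangle_\Omega = -\langle \partial_\mathbf{b}\phi(\operatorname{curl}\mathbf{a}^n),\operatorname{curl}\mathbf{\delta a}^n\rangle_\Omega =: -g_n$, so $\alpha\|\operatorname{curl}\mathbf{\delta a}^n\|^2 \le g_n \le \beta\|\operatorname{curl}\mathbf{\delta a}^n\|^2$; combining this with the estimate $\|\operatorname{curl}\mathbf{\delta a}^n\| \ge \beta^{-1}\|\partial_\mathbf{b}\phi(\operatorname{curl}\mathbf{a}^n)\|_{*}$ obtained by testing against $\mathbf{v}$ maximizing the right-hand side, one gets the key directional lower bound $g_n \ge (\alpha/\beta^2)\|\partial_\mathbf{b}\phi(\operatorname{curl}\mathbf{a}^n)\|^2$ (here $\|\cdot\|_*$ denotes the functional representing the gradient via Riesz in the $\operatorname{curl}$-norm). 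For step (ii), the Lipschitz bound on $\partial_\mathbf{b} w$ in \eqref{engertsberger:eq:dbw} yields the standard descent lemma $\Phi(\mathbf{a}^n+\tau\mathbf{\delta a}^n) \le \Phi(\mathbf{a}^n) - \tau g_n + \tfrac{L}{2}\tau^2\|\operatorname{curl}\mathbf{\delta a}^n\|^2$; comparing with the Armijo acceptance criterion shows any $\tau \le 2(1-\sigma)g_n/(L\|\operatorname{curl}\mathbf{\delta a}^n\|^2) $, and since $g_n/\|\operatorname{curl}\mathbf{\delta a}^n\|^2 \ge \alpha$, any $\tau \le 2(1-\sigma)\alpha/L$ is accepted, so backtracking stops with $\tau^n \ge \rho\min\{2(1-\sigma)\alpha/L,1\}$. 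Feeding this back into the Armijo inequality and using $g_n \ge (\alpha/\beta^2)\|\partial_\mathbf{b}\phi(\operatorname{curl}\mathbf{a}^n)\|^2$ gives a guaranteed per-step decrease $\Phi(\mathbf{a}^{n+1}) \le \Phi(\mathbf{a}^n) - c\,\|\partial_\mathbf{b}\phi(\operatorname{curl}\mathbf{a}^n)\|^2$ with $c = \rho\min\{2(1-\sigma)\alpha/L,1\}\sigma\alpha/\beta^2$.

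The final step is to convert this sufficient-decrease inequality into $r$-linear convergence using strong convexity. From $\gamma$-strong convexity of $\Phi$ one has the Polyak–Łojasiewicz-type inequality $\|\partial_\mathbf{b}\phi(\operatorname{curl}\mathbf{a}^n)\|^2 \ge 2\gamma\big(\Phi(\mathbf{a}^n)-\Phi(\mathbf{a})\big)$, hence $\Phi(\mathbf{a}^{n+1})-\Phi(\mathbf{a}) \le (1-2\gamma c)\big(\Phi(\mathbf{a}^n)-\Phi(\mathbf{a})\big)$, i.e. $q$-linear decay of the energy gap with $q = 1-2\gamma c$, which is exactly the claimed contraction factor once $c$ is substituted (noting $\alpha/\beta^2$ should be read together with the $\beta$-bound to recover the stated $\tfrac{2\gamma^2}{L\beta}$ form after also invoking $g_n\le\beta\|\operatorname{curl}\mathbf{\delta a}^n\|^2$; I would track these constants carefully). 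Finally, the two-sided bound $\tfrac{\gamma}{2}\|\operatorname{curl}(\mathbf{a}^n-\mathbf{a})\|^2 \le \Phi(\mathbf{a}^n)-\Phi(\mathbf{a}) \le \tfrac{L}{2}\|\operatorname{curl}(\mathbf{a}^0-\mathbf{a})\|^2 q^n$ (lower bound from strong convexity at the minimizer, upper bound from the $L$-smoothness at the minimizer) converts energy decay into the stated norm estimate with $C=L/\gamma$. Existence and uniqueness of $\mathbf{a}$, and that it is the limit, follow from Lemma~\ref{engertsberger:lemma:wellposedness} together with the fact that $\Phi(\mathbf{a}^n)\to\Phi(\mathbf{a})$ forces $\mathbf{a}^n\to\mathbf{a}$ by strong convexity. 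The main obstacle I anticipate is purely bookkeeping: keeping the chain of constants $\alpha,\beta,\gamma,L,\rho,\sigma$ consistent through the Riesz identification so that the final contraction factor matches $q = 1 - \rho \min\{2(1-\sigma)\tfrac{\alpha}{L},1\}\,\sigma\,\tfrac{2\gamma^2}{L\beta}$ exactly; there is no conceptual difficulty beyond the standard optimization argument, only the care needed to ensure every constant is dimensionally and numerically the sharp one claimed.
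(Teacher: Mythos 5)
Your overall strategy is the same as the paper's: interpret \eqref{engertsberger:eq:iter}--\eqref{engertsberger:eq:update} as a generalized gradient descent for \eqref{engertsberger:eq:min}, prove a descent/sufficient-decrease estimate, derive the uniform lower bound $\tau^n \ge \rho\min\{2(1-\sigma)\alpha/L,\,1\}$ for the Armijo stepsize (your Step~(ii) is identical to the paper's Steps~2--3, including the constant), obtain linear decay of the energy gap, and convert it to the norm estimate via the two-sided bound $\tfrac{\gamma}{2}\|\operatorname{curl}(\mathbf{a}^n-\mathbf{a})\|^2 \le \Phi(\mathbf{a}^n)-\Phi(\mathbf{a}) \le \tfrac{L}{2}\|\operatorname{curl}(\mathbf{a}^n-\mathbf{a})\|^2$, which is exactly the paper's Step~4. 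Qualitatively the argument is sound and establishes global $r$-linear convergence.

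The one genuine issue is the step you yourself flagged but did not resolve: your route from the sufficient-decrease inequality to the energy gap goes through the Riesz gradient in the plain $\operatorname{curl}$-$L^2$ norm and a Polyak--\L ojasiewicz inequality, and this chain produces
$g_n \ge \tfrac{\alpha}{\beta^2}\|\partial_\mathbf{b}\phi\|_*^2 \ge \tfrac{2\gamma\alpha}{\beta^2}\big(\Phi(\mathbf{a}^n)-\Phi(\mathbf{a})\big)$,
hence a contraction factor $q' = 1-\tau_*\sigma\tfrac{2\gamma\alpha}{\beta^2}$. This does \emph{not} reduce to the stated $q = 1-\tau_*\sigma\tfrac{2\gamma^2}{L\beta}$: the two factors $\tfrac{\gamma\alpha}{\beta^2}$ and $\tfrac{\gamma^2}{L\beta}$ differ by the ratio $\tfrac{\alpha L}{\gamma\beta}$, which is not $1$ in general (Assumption~\ref{engertsberger:ass:2} does not tie $\alpha,\beta$ to $\gamma,L$), and inserting $g_n\le\beta\|\operatorname{curl}\mathbf{\delta a}^n\|^2$ as you suggest does not repair this. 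The paper avoids the detour entirely: it bounds $g_n=\|\operatorname{curl}\mathbf{\delta a}^n\|_{\nu^n}^2$ from below directly by writing $\|\operatorname{curl}\mathbf{\delta a}^n\|_{\nu^n}$ as a supremum over the $\nu^n$-normalized duals, substituting the defining equation \eqref{engertsberger:eq:update} together with $\langle\partial_\mathbf{b}\phi(\operatorname{curl}\mathbf{a}),\operatorname{curl}\mathbf{v}\rangle_\Omega=0$, and choosing the test function $\mathbf{v}=\mathbf{a}-\mathbf{a}^n$; strong monotonicity \eqref{engertsberger:eq:dbw} and the upper bound $\|\cdot\|_{\nu^n}\le\sqrt{\beta}\|\cdot\|_{L^2}$ then give $\|\operatorname{curl}\mathbf{\delta a}^n\|_{\nu^n}\ge\tfrac{\gamma}{\sqrt{\beta}}\|\operatorname{curl}(\mathbf{a}-\mathbf{a}^n)\|_{L^2}$, and the $L$-upper bound in \eqref{engertsberger:norm:energy} converts this into $g_n\ge\tfrac{2\gamma^2}{L\beta}(\Phi(\mathbf{a}^n)-\Phi(\mathbf{a}))$. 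Replacing your Riesz/PL step by this single duality estimate yields the theorem exactly as stated; as written, your proposal proves the convergence claim but with a different (generally incomparable) explicit contraction factor.
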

By Assumption~\ref{engertsberger:ass:1}, the decay estimate also holds for the full $H(\operatorname{curl})$-norm of the error with the same contraction factor $q$ and with constant $C=\frac{L}{2\,\gamma} (1+C_\Omega)$.

\bigskip 
The remainder of this section is devoted to the proof of this result. We assume that $\mathbf{a}^n \in V_0$ for some $n \ge 0$ is already available and consider the next iterate.

\smallskip 
\noindent 
\textbf{Step 1.}
The conditions \eqref{engertsberger:eq:nu} and Assumption~\ref{engertsberger:ass:1} imply that 
\begin{align} \label{engertsberger:eq:scalar}
\langle \operatorname{curl} \mathbf{v}, \operatorname{curl} \mathbf{w} \rangle_{\nu^n} :=  \langle \nu^n \operatorname{curl} \mathbf{v}, \operatorname{curl} \mathbf{w} \rangle_\Omega
\end{align} 
defines a scalar product on $V_0$. 
Hence \eqref{engertsberger:eq:update} amounts to a linear elliptic variational problem on a Hilbert space, and existence of a unique solution $\mathbf{\delta a}^n \in V_0$ follows by the Lax-Milgram lemma \cite{engertsberger:Zeidler19902B}.  

\smallskip 
\noindent 
\textbf{Step 2.}
We next show that $\mathbf{\delta a}^n$ is a descent direction for \eqref{engertsberger:eq:min}.  For ease of notation, we abbreviate $\mathbf{b}^n = \operatorname{curl} \mathbf{a}^n$ and $\mathbf{\delta b}^n = \operatorname{curl} \mathbf{\delta a}^n$. Then by $\Phi(\mathbf{a}) = \int_\Omega \phi(\operatorname{curl} \mathbf{a}) \, dx$, $\phi(\mathbf{b}) = w(\mathbf{b}) - \mathbf{h_s} \cdot \mathbf{b}$, and the fundamental theorem of calculus, we get 
\begin{align*}
\Phi(\mathbf{a}^n &+ \tau \mathbf{\delta a}^n) - \Phi(\mathbf{a}^n) 
= \int_0^\tau \langle \partial_\mathbf{b} \phi(\mathbf{b}^n + t \mathbf{\delta b}^n), \mathbf{\delta b}^n\rangle_\Omega \, dt \\
&=  \int_0^\tau \langle \partial_\mathbf{b} \phi(\mathbf{b}^n + t \mathbf{\delta b}^n) - \partial_\mathbf{b} \phi(\mathbf{b}^n), \mathbf{\delta b}^n\rangle_\Omega + \langle \partial_\mathbf{b} \phi(\mathbf{b}^n), \mathbf{\delta b}^n\rangle_\Omega\, dt \\
&\le \frac{L \tau^2}{2} \|\mathbf{\delta b}^n\|^2_{L^2(\Omega)} - \tau \|\mathbf{\delta b}^n\|_{\nu^n}^2 
\le -\tau (1 - \frac{L \tau}{2\alpha}) \|\mathbf{\delta b}^n\|^2_{\nu^n}.
\end{align*}
Here we used \eqref{engertsberger:eq:dbw} in the first, and \eqref{engertsberger:eq:nu} for the second inequality. 
If $\mathbf{\delta b}^n = 0$, we already arrived at the solution. Otherwise, we certainly obtain some decay in the energy for any stepsize $0 < \tau < \frac{2\alpha}{L}$ sufficiently small. 

\smallskip 
\noindent 
\textbf{Step 3.}
As a consequence and noting that $\| \mathbf{\delta b}^n\|_{\nu^n} = -\langle \partial_\mathbf{b} \phi(\operatorname{curl} \mathbf{a}^n),\operatorname{curl} \mathbf{\delta a}^n\rangle_\Omega$, we see that the Armijo rule \eqref{engertsberger:eq:armijo} selects a stepsize $\tau^n$ satisfying 
\begin{align}
    0 < \tau_* \leq \tau^n \leq 1
\end{align}
with uniform lower bound given by $\tau_* = \rho\,\min\{2\,(1-\sigma)\,\alpha/L,\,1\}$.

\smallskip 
\noindent 
\textbf{Step 4.}
Similar to before, we abbreviate $\mathbf{b}^n=\operatorname{curl} \mathbf{a}^n$ and $\mathbf{b} = \operatorname{curl} \mathbf{a}$. Then using the fundamental theorem of calculus twice and $\partial_\mathbf{b} \phi(\mathbf{b}) = 0$, we see that 
\begin{align*}
\Phi(\mathbf{a}^n) - \Phi(\mathbf{a}) 
&= \int_0^1 \langle \partial_\mathbf{b} \phi(\mathbf{b} + t (\mathbf{b}^n - \mathbf{b})), \mathbf{b}^n - \mathbf{b} \rangle_\Omega \, dt \\
&= \int_0^1 \int_0^t \langle \partial_\mathbf{bb} w(\cdot)\, (\mathbf{b}^n - \mathbf{b}), \mathbf{b}^n - \mathbf{b} \rangle_\Omega \ ds\,dt.
\end{align*}
Here we used $ \Phi(\mathbf{a})=\int_\Omega \phi(\operatorname{curl}(\mathbf{a})) \, dx$ with $\phi(\mathbf{b}) = w(\mathbf{b}) - \mathbf{h_s} \cdot \mathbf{b}$. 
With the help of Assumption~\ref{engertsberger:ass:1}, we can then immediately deduce that 
\begin{align}
\frac{\gamma}{2} \|\operatorname{curl} (\mathbf{a}^n - \mathbf{a})\|_{L^2(\Omega)}^2 \le \Phi(\mathbf{a}^n) - \Phi(\mathbf{a}) \le \frac{L}{2} \|\operatorname{curl} (\mathbf{a}^n - \mathbf{a})\|_{L^2(\Omega)}^2.
\label{engertsberger:norm:energy}
\end{align}

\smallskip 
\noindent 
\textbf{Step 5.}
By employing the definition of $\mathbf{\delta a}^n$ in \eqref{engertsberger:eq:update}, adding $\partial_\mathbf{b} \phi(\operatorname{curl} \mathbf{a}) = 0$, testing in~\eqref{engertsberger:eq:update} with $\mathbf{v}=\mathbf{a} - \mathbf{a}^n$, and using the lower bounds in \eqref{engertsberger:eq:nu}, we get
\begin{align*}
    \| \operatorname{curl} \mathbf{\delta a}^n \|_{\nu^n} &= \sup_{\mathbf{v} \in V} \frac{\langle \nu^n \operatorname{curl} \mathbf{\delta a}^n, \operatorname{curl} \mathbf{v} \rangle_\Omega}{\|\operatorname{curl} \mathbf{v}\|_{\nu^n}} = \sup_{\mathbf{v} \in V} \frac{\langle \partial_\mathbf{b} \phi(\operatorname{curl} \mathbf{a}) - \partial_\mathbf{b} \phi(\operatorname{curl} \mathbf{a}^n), \operatorname{curl} \mathbf{v} \rangle_\Omega}{\|\operatorname{curl} \mathbf{v}\|_{\nu^n}}
     \\
    &\geq \frac{\gamma\,\|\operatorname{curl} \mathbf{a} - \operatorname{curl} \mathbf{a}^n\|_{L^2(\Omega)}^2}{\|\operatorname{curl} \mathbf{a} - \operatorname{curl} \mathbf{a}^n\|_{\nu^n}} \geq \frac{\gamma}{\sqrt{\beta}} \|\operatorname{curl} \mathbf{a} - \operatorname{curl} \mathbf{a}^n\|_{L^2(\Omega)},
    \end{align*}
This yields the lower bound
$\| \operatorname{curl} \mathbf{\delta a}^n \|_{\nu^n}^2 \ge \frac{2\,\gamma^2}{L\,\beta} \big(\Phi(\mathbf{a}^n) - \Phi(\mathbf{a})\big)$
for the increments.
The previous estimate, the definition of $\mathbf{\delta a}^n$ in \eqref{engertsberger:eq:update}, 
and the Armijo rule~\eqref{engertsberger:eq:armijo} lead to
\begin{align*}
\Phi(\mathbf{a}^{n+1}) - \Phi(\mathbf{a})
&=  \big(\Phi(\mathbf{a}^n) - \Phi(\mathbf{a})\big) + \big( \Phi(\mathbf{a}^{n+1}) - \Phi(\mathbf{a}^n)\big) \\
&\le \big(1 - \tau_* \sigma \frac{2 \gamma^2}{L\,\beta}\big) \, \big( \Phi(\mathbf{a}^n) - \Phi(\mathbf{a})\big).
\end{align*}
A recursive application of this inequality yields $\Phi(\mathbf{a}^n) - \Phi(\mathbf{a}) \le q^n \big( \Phi(\mathbf{a}^0) - \Phi(\mathbf{a})\big)$ with contraction factor $q$ as announced in the theorem. The final estimate then follows by simply applying the two bounds in~\eqref{engertsberger:norm:energy} another time. \qed 

\bigskip
\noindent 
\textbf{Remarks and extensions.}
Theorem~\ref{engertsberger:lemma:wellposedness} provides a global convergence result for the continuous version of the vector potential formulation of magnetostatics. 
A quick inspection of the proof reveals that the arguments carry over verbatim also to conforming Galerkin approximations of the problem, e.g., by FEM or IgA, and yield global convergence on the discrete level with the same convergence factor $q$. In particular, one obtains uniform global convergence, independent of the discretization parameters. This will also be demonstrated in our numerical tests below.

The results further remain valid for inexact Galerkin approximations resulting from numerical quadrature or domain approximations. 
For the Newton direction, one can additionally show local quadratic convergence; the convergence radius however depends on the mesh size $h$ and the polynomial degree $p$. 
A detailed analysis in these directions will be presented elsewhere. 
The above convergence analysis also applies to the  scalar potential formulation of magnetostatics; see~\cite{engertsberger:Engertsberger2023}, and may be extended to certain problems in magneto-quasistatics which may involve other nonlinearities; see~\cite{engertsberger:Dular2020} for an application involving superconductive materials.

\section{Numerical validation}
\label{engertsberger:section:numerical:illustration}

To illustrate our theoretical findings, let us now briefly report about some numerical tests. For ease of presentation, we consider a two-dimensional problem which is inspired by Benchmark~II in~\cite{engertsberger:Lomonova2019}. Details on the geometry, which is depicted in Figure~\ref{engertsberger:figure:pictures}, and on the choice of boundary conditions can be found in this reference. 

\begin{figure}[ht]
\centering
\includegraphics[trim={6.3cm 9.4cm 5.7cm 8cm},clip,width=0.34\textwidth]{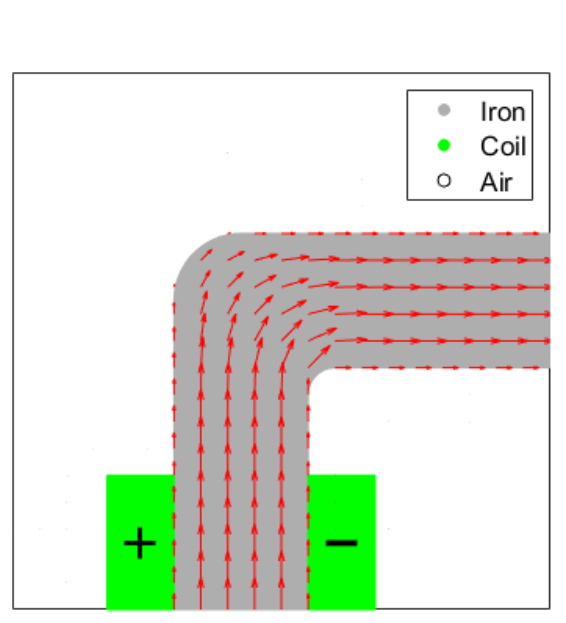}
\qquad \qquad
\includegraphics[trim={6.3cm 9.4cm 5.7cm 8cm},clip,width=0.34\textwidth]{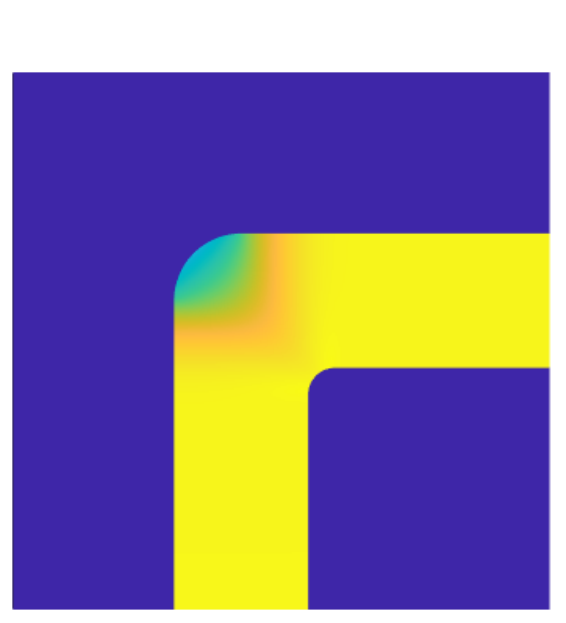}
\caption{ 
 Numerical solution of the magnetostatic problem obtained by the vector potential formulation using second-order finite elements. Left: geometric setup with iron (grey) and a coil (green) with the magnetic flux lines (red). Right: magnitude of the magnetic flux $\mathbf{b} = \operatorname{curl} \mathbf{a}$.}
\label{engertsberger:figure:pictures}
\end{figure}

\smallskip 
\noindent
\textbf{Problem data.}
For copper and air, we choose $w(\mathbf{b})=\frac{1}{2\mu_0} |\mathbf{b}|^2$ resulting in the linear relation $\mathbf{h}=\partial_\mathbf{b} w(\mathbf{b}) = \frac{1}{\mu_0} \mathbf{b}$. 
For the ferromagnetic core, the isotropic energy density $w(\mathbf{b})=\widetilde w(|\mathbf{b}|)$ is constructed as follows: First, the material data provided in the TEAM~13 problem description~\cite{engertsberger:team13} is interpolated by a strongly monotone spline function~\cite{engertsberger:Pechstein2006}, which is then integrated to obtain $\widetilde w(|\mathbf{b}|)$. 
The current density is finally chosen as $\mathbf{j}=\pm \, 6.25 \cdot 10^5 \, \text{A/m}$.
Let us note that the conditions in Assumption~\ref{engertsberger:ass:1} are thus satisfied by construction.

\smallskip 
\noindent
\textbf{Numerical results.}
For our computations, we use an isoparametric finite element method which was implemented in \textsc{Matlab}. Numerical quadrature with sufficient accuracy is used for the nonlinear terms. %
In our tests, we compare the convergence behavior of algorithm \eqref{engertsberger:eq:iter}--\eqref{engertsberger:eq:update} for different choices of the generalized reluctivities~$\nu^n$. In all cases, the Armijo rule \eqref{engertsberger:eq:armijo} is used for adaptive selection of the stepsize and as termination criteria we use that the difference in energy $\Phi(\mathbf{a}^{n+1}) - \Phi(\mathbf{a}^n) < \epsilon \| \operatorname{curl} \mathbf{\delta a}^0 \|_{\nu^0}$ is smaller than a relative factor $\epsilon = 10^{-7}$ times the norm of the first Newton step $\mathbf{\delta a}^0$.
In Table~\ref{engertsberger:tab:1}, we list details about the discretization parameters and the resulting problem dimension, and we report about the iteration numbers obtained by the iterative methods under consideration. 
\begin{table}[ht]
\centering
\setlength\tabcolsep{1.5ex}
\renewcommand{\arraystretch}{1.1}
\resizebox{0.93\linewidth}{!}{%
\begin{tabular}{c||c|c|c|c|c||c|c|c|c}
order & \multicolumn{5}{c||}{$p = 1$}  & \multicolumn{4}{c}{$p = 2$} \\
\hline \\[-0.37cm]
$h$ & $2^{-1}$ & $2^{-2}$    & $2^{-3}$  & $2^{-4}$
         & $2^{-5}$ & $2^{-1}$    & $2^{-2}$ & $2^{-3}$ & $2^{-4}$  \\
\hline
dof & $1.2k$ & $4.7k$    & $19k$  & $76k$
         & $304k$ &  $4.7k$    & $19k$  & $76k$
         & $304k$ \\
\hline \hline
%\multicolumn{10}{||c||}{iterations} \\
%\hline
fixed-point & $855$ & $859$ & $857$ & $857$
         & $-$ & $857$ & $857$& $857$& $-$  \\
\hline
Ka\v{c}anov & $45$ & $45$ & $52$ & $62$
         & $61$ & $45$ & $54$& $58$& $60$  \\
\hline
Newton & $10$ & $11$ & $11$ & $10$
         & $10$ & $10$ & $10$& $10$& $10$ 
\end{tabular}}

\caption{Discretization parameters and iteration numbers for the individual methods.}
\label{engertsberger:tab:1}
\end{table}

As predicted by Theorem~\ref{engertsberger:theorem:global:convergence}, all method yield global convergence with iteration numbers that are independent of the discretization parameters. 
The fixed-point iteration (with $\nu= 7.98 \cdot 10^4$, which was chosen by some tuning) requires most iterations, but the computational cost for each iteration can be reduced by re-using the factorization of the system matrix for determining the update direction. 
The Ka\v{c}anov iteration uses additional information about the material behavior leading to faster convergence, but a separate factorization is required in every step. 
As expected, the Newton method with line search has the smallest iteration numbers and  it is also the fastest concerning computation times.

\begin{acknowledgement}
The authors are grateful for financial support by the international FWF/DFG funded Collaborative Research Centre CREATOR (TRR361/SFB-F90). \\[-2em]
\end{acknowledgement}

% \bibliographystyle{spmpsci}
% \bibliography{reference.bib}

%%%%%%%%%%%%%%%%%%%%%%%%%%%%%%%%%%%%%%%%%%%%%%%%%%%%%%%%%%%%%%%%%%%%%%

%%%%%%%%%%%%%%%%%%%%%%%%%%%%%%%%%%%%%%%%%%%%%%%%%%%%%%%%%%%%%%%%%%%%%%

\end{document}